\documentclass[11pt,reqno]{amsart}

\usepackage[a4paper,margin=1in]{geometry}
\usepackage{amsmath,amssymb,amsthm,mathtools}
\usepackage[expansion=false]{microtype}
\usepackage{aliascnt}
\usepackage{graphicx}
\usepackage[hidelinks]{hyperref}
\usepackage[capitalize]{cleveref}

\newtheorem{theorem}{Theorem}[section]
\newaliascnt{proposition}{theorem}
\newtheorem{proposition}[proposition]{Proposition}
\aliascntresetthe{proposition}
\newaliascnt{lemma}{theorem}
\newtheorem{lemma}[lemma]{Lemma}
\aliascntresetthe{lemma}
\newaliascnt{corollary}{theorem}

\aliascntresetthe{corollary}
\theoremstyle{definition}
\newaliascnt{definition}{theorem}
\newtheorem{definition}[definition]{Definition}
\aliascntresetthe{definition}
\theoremstyle{remark}
\newaliascnt{remark}{theorem}
\newtheorem{remark}[remark]{Remark}
\aliascntresetthe{remark}

\newcommand{\R}{\mathbb{R}}

\newcommand{\Z}{\mathbb{Z}}
\newcommand{\C}{\mathbb{C}}
\newcommand{\T}{\mathbb{T}}
\newcommand{\PP}{\mathbb{P}}
\newcommand{\EE}{\mathbb{E}}
\newcommand{\F}{\mathcal{F}}
\newcommand{\abs}[1]{\left|#1\right|}

\newcommand{\one}{\mathbf 1}
\newcommand{\Mz}{\mathtt{M}}
\newcommand{\Sinv}{\widehat S^{\,\mathtt{inv}}}
\DeclareMathOperator{\Var}{Var}
\DeclareMathOperator{\Unif}{Unif}
\DeclareMathOperator{\Imag}{Im}
\newcommand{\blfootnote}[1]{%
  \begingroup
  \renewcommand\thefootnote{}\footnote{#1}%
  \addtocounter{footnote}{-1}%
  \endgroup
}

\title{Sharp asymptotics for the tree-completion time in\\ cylindrical Hastings--Levitov$(0)$}
\author[Xiao-Ming Fu]{Xiao-Ming Fu}
\address{School of Mathematical Sciences, University of Science and Technology of China,
Hefei, Anhui 230026, P.R.\ China}
\email{fuxm@ustc.edu.cn}
\author[Tianyang Sun]{Tianyang Sun}
\address{School of Mathematical Sciences, University of Science and Technology of China,
Hefei, Anhui 230026, P.R.\ China}
\email{tysun@mail.ustc.edu.cn}
\author[Yuxuan Zong]{Yuxuan Zong}
\address{School of Mathematical Sciences, Peking University, Beijing, China}
\email{yxzong25@stu.pku.edu.cn}
\date{July 2026}

\begin{document}
\begin{abstract}
Let $\mathrm{CHL}_N$ be the cylindrical Hastings--Levitov aggregation process with
parameter $0$ on a cylinder of width $N$ with particles of fixed size $\lambda>0$, and let
$\omega_{N,\lambda}$ be its \emph{tree-completion time} --- the last time at which a new tree is
born on the base circle. Chen, Procaccia and Zong proved the sharp upper bound
$\EE[\omega_{N,\lambda}]\le(1+\varepsilon)(\log N)/(2\lambda)$ and conjectured the matching
limit. Here we prove the matching lower bound, and therefore
\[
  \lim_{N\to\infty}\frac{\EE[\omega_{N,\lambda}]}{\log N}=\frac{1}{2\lambda}
  \qquad\text{for every fixed }\lambda>0 .
\]
\end{abstract}
\maketitle
\blfootnote{\emph{2020 Mathematics Subject Classification.} Primary 60K35, 82C24; Secondary 30C35, 60G42.
\emph{Key words and phrases.} Hastings--Levitov aggregation, Laplacian growth, tree-completion time,
coverage process, second moment method, Paley--Zygmund inequality, conformal slit map.}

\section{Introduction}
\label{sec:intro}

Diffusion-limited aggregation (DLA) \cite{WS} is a random growth model in which
particles arrive sequentially and attach to the existing cluster according to harmonic measure.
Hastings and Levitov \cite{HL} introduced an off-lattice analogue by representing planar growth
through iterated compositions of conformal maps, with each map encoding the addition of one
particle. The resulting Hastings--Levitov family is a tractable conformal model of Laplacian
growth that retains the screening and branching behavior characteristic of DLA.

Competition between branches is particularly transparent in cylindrical geometry. In the
cylindrical Hastings--Levitov$(0)$ process introduced by Procaccia and Zhuchenko \cite{PZ}, the
aggregate forms a forest of trees rooted on the base circle. Although new trees may continue to
emerge and finite trees may receive particles for a long time, there is almost surely a unique
tree that grows forever \cite{CPZ,NT}. This observation leads to a basic stabilization problem:
determine the \emph{one-arm domination time}, defined as the first time after which only the unique
infinite tree receives particles. The time at which competing arms cease to grow is a natural
quantitative measure of long-time stabilization in random growth models and was the principal
focus of \cite{CPZ}.

That study naturally suggests a complementary question about the creation, rather than the
subsequent growth, of competing trees: when does the base circle stop producing new trees? The
corresponding \emph{tree-completion time}, denoted by $\omega_{N,\lambda}$, is the last time at which
a new tree is born. In \cite{CPZ}, the sharp upper bound
\[
  \EE[\omega_{N,\lambda}]\le (1+\varepsilon)\frac{\log N}{2\lambda}
\]
was proved for every fixed $\lambda>0$, every $\varepsilon>0$, and all sufficiently large $N$, and the matching asymptotic
formula was conjectured. The present paper establishes the missing lower bound. Consequently,
\[
  \EE[\omega_{N,\lambda}]\sim\frac{\log N}{2\lambda},
  \qquad N\to\infty,
\]
which determines the expected tree-completion time to first order and resolves the conjecture
from \cite{CPZ}.

\subsection{The model}

We study the cylindrical Hastings--Levitov aggregation process with parameter $0$, introduced
by Procaccia and Zhuchenko \cite{PZ}, using the formulation of Chen, Procaccia and Zong
\cite{CPZ}. Fix an integer cylinder width $N\ge1$ and a particle size $\lambda>0$. We represent the
base circle and the cylinder by
\[
  \T_N=\R/(2\pi N\Z)=[0,2\pi N),\qquad
  \T^N=\{z\in\C:\Imag z>0\}/(z\sim z+2\pi N).
\]
We also write $\mathbb D_0=\{z\in\C:\abs z>1\}$ for the complement of the unit disk and
$\mathbb H=\{z\in\C:\Imag z>0\}$ for the upper half-plane. The attachment of a single particle of
size $\lambda$ at the base point $0$ is encoded by the \emph{cylindrical slit map centered at $0$},
which maps $\T^N$ conformally onto $\T^N\setminus[\,0,i\lambda\,]$. It is defined by
\[
  S^{N,\lambda}=f_N^{-1}\circ g^{-1}\circ\phi_\delta\circ g\circ f_N,
\]
where the constituent maps are
\[
  f_N(z)=e^{-iz/N},\qquad g(z)=i\frac{z-1}{z+1},\qquad \phi_a(z)=\sqrt{z^2(1-a^2)-a^2}\quad(a>0).
\]
The parameter $\delta=\delta(N,\lambda)\in(0,1)$ is determined by the normalization
$S^{N,\lambda}(0)=i\lambda$ and satisfies
\begin{equation}\label{eq:delta}
  \delta=1-\frac{2}{e^{\lambda/N}+1}=\frac{\lambda}{2N}+O\!\left(\frac{\lambda}{N}\right)^{3}
  \qquad(N\to\infty).
\end{equation}
The slit map centered at a general point $x\in\T_N$ is obtained from $S^{N,\lambda}$ by
translation,
\begin{equation}\label{eq:slitx}
  S^{N,\lambda}_x(z)=\operatorname{Re}(z)+S^{N,\lambda}(z-x)-\bigl(\operatorname{Re}(z-x)\bmod 2\pi N\bigr),
  \qquad z\in\T^N,
\end{equation}
so that $S^{N,\lambda}_0=S^{N,\lambda}$ and $S^{N,\lambda}_x$ attaches a slit of length $\lambda$
above $x$.
In the $\alpha=0$ model, the attachment angles are independent and uniform; neither
harmonic-measure feedback nor derivative-based normalization is used to rescale the particle size
(see \Cref{fig:chl}).

\begin{definition}[The process $\mathrm{CHL}_N$]\label{def:chl}
Let $P$ be a Poisson point process of intensity $1$ on $\R_+\times\T_N$. For every $t>0$,
the set $P\cap([0,t]\times\T_N)$ is almost surely finite; enumerate its points as
$(t_1,x_1),\dots,(t_n,x_n)$, where $0<t_1<\cdots<t_n\le t$. The \emph{cylindrical
Hastings--Levitov process with parameter $0$}, denoted by $\mathrm{CHL}_N$, is represented by the
aggregate map
\[
  \mathcal C^{N,\lambda}_t=S^{N,\lambda}_{x_1}\circ S^{N,\lambda}_{x_2}\circ\cdots\circ S^{N,\lambda}_{x_n}.
\]
Since $\abs{\T_N}=2\pi N$, particles arrive on the cylinder at total rate $2\pi N$.
\end{definition}

\begin{definition}[Trees]\label{def:tree}
At time $t$, the \emph{trees} of $\mathrm{CHL}_N$ are the connected components of
\[
  \mathcal C^{N,\lambda}_t(\T_N\times\{0\})\setminus(\T_N\times\{0\}).
\]
We write $\mathcal N_t$ for their number. The small-particle analysis of Norris and Turner
\cite{NT} implies that almost surely exactly one tree grows forever; we call it the infinite arm.
\end{definition}

\begin{figure}[t]
  \centering
  \includegraphics[width=0.55\linewidth]{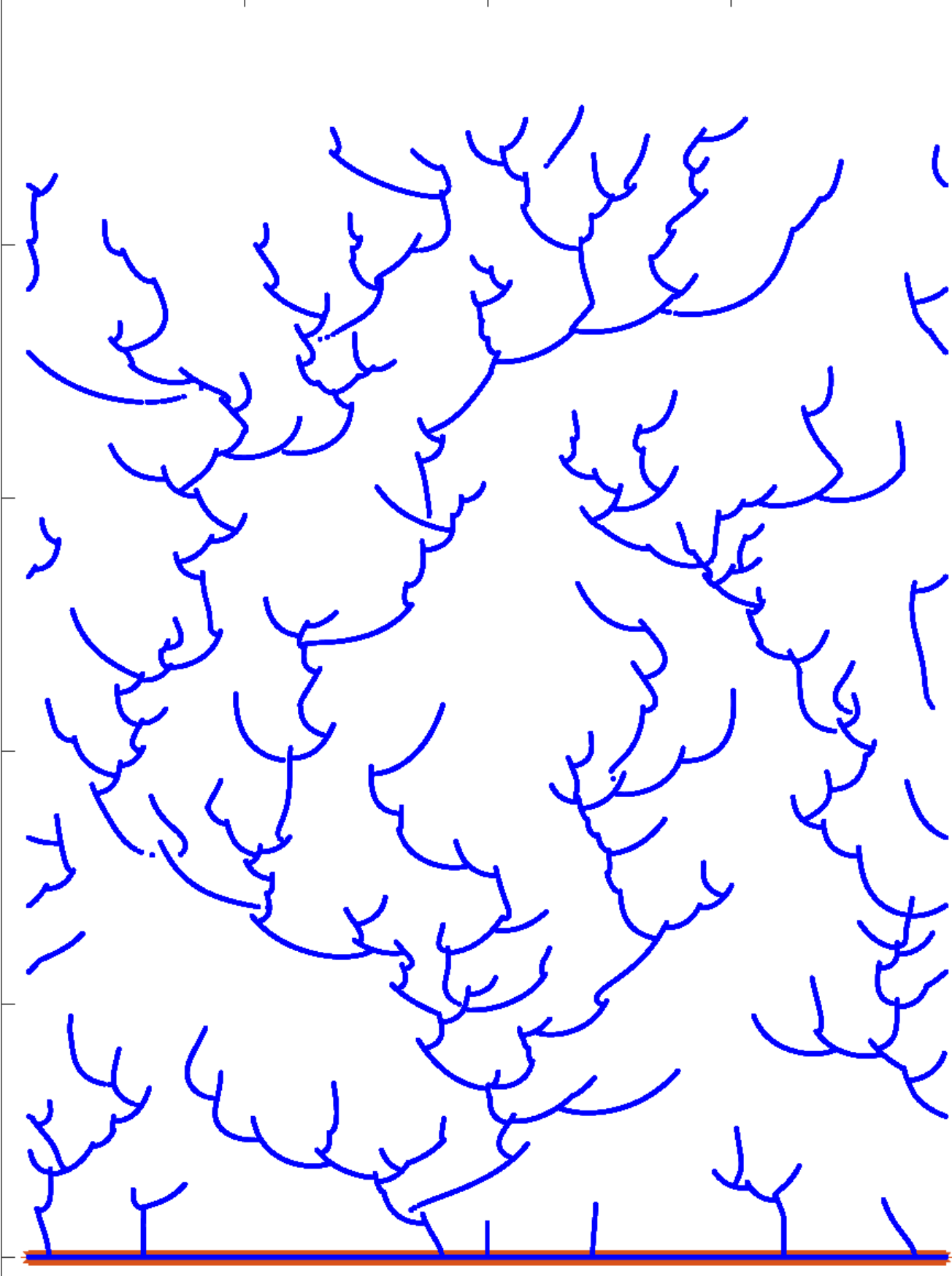}
  \caption{A simulation of $\mathrm{CHL}_N$ with the cylinder unrolled into a strip. The forest
  is rooted on the base circle $\T_N$, shown in red. Reproduced from Chen, Procaccia and Zong
  \cite[Figure~2 (left)]{CPZ}.}
  \label{fig:chl}
\end{figure}

We now formalize the tree-completion time introduced above.

\begin{definition}[Tree-completion time]\label{def:omega}
The \emph{tree-completion time} $\omega_{N,\lambda}$ is the last time at which a new tree is
born on the base circle $\T_N$. Equivalently, after time $\omega_{N,\lambda}$, every subsequently
attached particle joins a tree that already exists.
\end{definition}

\subsection{Main result}

\begin{theorem}[Sharp asymptotics]\label{thm:main}
For every fixed $\lambda>0$,
\begin{equation}\label{eq:limit}
  \lim_{N\to\infty}\frac{\EE[\omega_{N,\lambda}]}{\log N}=\frac{1}{2\lambda}.
\end{equation}
\end{theorem}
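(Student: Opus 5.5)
The upper bound $\limsup_N \EE[\omega_{N,\lambda}]/\log N\le 1/(2\lambda)$ is the result of \cite{CPZ} quoted in the introduction. So it remains to prove the lower bound $\liminf_N \EE[\omega_{N,\lambda}]/\log N\ge 1/(2\lambda)$. Since $\omega_{N,\lambda}\ge0$, it suffices to show the following: for every $\varepsilon>0$ and $t=t_N=(1-\varepsilon)\log N/(2\lambda)$, we have $\PP(\omega_{N,\lambda}\ge t)\to1$, or merely $\PP(\omega_{N,\lambda}\ge t)\ge 1-\varepsilon$. We have $\omega_{N,\lambda}\ge t$ as soon as some particle arriving in a time window $[t,t+1]$ founds a new tree. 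By the structure of the slit maps, this happens if it lands at a point $x\in\T_N$ lying in an ``uncovered'' region of the base circle at that moment. We therefore reduce to a coverage problem on $\T_N$. Pull the base circle back through the current aggregate map: each arrival at $x_k$ acts on $\T_N$ by the boundary map of $S^{N,\lambda}_{x_k}$. This map collapses an arc around $x_k$ onto the slit and stretches the rest of the circle. A new point $x$ creates a new tree iff its image under the inverse flow lands on the base segment $\T_N\times\{0\}$ rather than on an existing slit.

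\textbf{Step 1 (single-slit geometry).} From the explicit formula $S^{N,\lambda}=f_N^{-1}\circ g^{-1}\circ\phi_\delta\circ g\circ f_N$ and \eqref{eq:delta}, compute the length of the base arc swallowed by one slit. We need its behavior near the edges, and the deformation $|(S^{N,\lambda})'|$ on the remaining boundary. As $N\to\infty$ the map converges to the half-plane slit map $z\mapsto\sqrt{z^2-\lambda^2}$ up to $O(\lambda/N)$ errors. Far from the slit, the boundary derivative is $1+O(\lambda^2/|x|^2)$. The key quantity is the uncovered measure $U_t\subset\T_N$ at time $t$, i.e.\ the set of base points still bare. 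I would show that
\[\EE|U_t|=2\pi N\,e^{-2\lambda t(1+o(1))},\]
using that a fixed bare point becomes covered at rate asymptotically $2\lambda$: the preimage of the slit's footprint has length $\sim2\lambda$, and arrivals on $\T_N$ have intensity $1$. With $t=t_N$ this gives $\EE|U_t|\approx 2\pi N^{\varepsilon}\to\infty$.

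\textbf{Step 2 (second moment).} Apply Paley--Zygmund to $|U_t|$, or better to a discretized count $Z=\sum_j\one\{\text{point } y_j \text{ bare at } t\}$ over a grid of spacing $\gg \lambda$. We need $\EE[Z^2]\le(1+o(1))\EE[Z]^2$. Correlations between $y_i$ and $y_j$ come only from particles landing within distance $O(\lambda)$ of both, plus the long-range distortion of the conformal maps. The distortion decays like $\lambda^2/d^2$ and hence is summable. For grid points separated by $d\gg\log N$, the covering events should be nearly independent. The near-diagonal pairs contribute $O(N\log N\cdot N^{-(1-\varepsilon)})$ relative to $\EE[Z]^2\asymp N^{2\varepsilon}$, which is negligible for small $\varepsilon$ after tuning. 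I expect this step to be the main obstacle. The composition of many slit maps means a bare point's position drifts, since the circle gets reparametrized. So ``bare'' must be tracked in pulled-back coordinates, and the decoupling requires controlling the accumulated distortion $\prod|S'|$ along the flow. I would try to dominate it by a martingale (the log-derivative has mean zero to leading order by symmetry of the slit map) and use exponential moment bounds to show that the drift of a bare arc is $O(\sqrt{t})$ and its length is preserved up to factor $1+o(1)$ with high probability.

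\textbf{Step 3 (conclusion).} With $|U_t|\ge c N^{\varepsilon}$ w.h.p., and with the bare set persisting in a sub-window of $[t,t+1]$ (bare arcs of length $\gtrsim\lambda$ survive an $O(1)$ time with positive probability), a Poisson arrival lands in $U$ during $[t,t+1]$ with probability $1-\exp(-cN^{\varepsilon})\to1$. That arrival starts a new tree. Hence $\EE[\omega_{N,\lambda}]\ge(1-o(1))(1-\varepsilon)\log N/(2\lambda)$. Letting $\varepsilon\downarrow0$ and combining with \cite{CPZ} yields \eqref{eq:limit}.
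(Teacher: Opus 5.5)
Your reduction and Step~1 match the paper. The bare set $U_t$ is the rescaled zero-colored set $N\cdot\Mz_k(0)$, and $\EE\abs{U_t}=2\pi N e^{-2\lambda t(1+o(1))}$ is the exact recursion $\EE[Z_{k+1}\mid\F_k]=qZ_k$ after Poissonization. But the whole lower bound rests on Step~2, and there you give no proof, only a program (grid points, near-independence beyond distance $\log N$, martingale control of accumulated distortion) that you yourself flag as the main obstacle.

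Its stated premise is also false: the log-derivative does not have mean zero ``by symmetry''. The inverse boundary maps have derivative $D_\delta(\cdot-x)\le\sqrt{1-\delta^2}<1$ everywhere. So along the trajectory of a fixed initial point, the log-Jacobian is a sum of i.i.d.\ strictly negative terms with mean
\[
\frac1{2\pi}\int_{\T_1}\log D_\delta=\frac12\log\frac{1-\delta}{1+\delta}\approx-\delta .
\]
That is about $-\pi\lambda$ per unit time and about $-(1-\varepsilon)\frac{\pi}{2}\log N$ by time $t$. Hence no bare arc keeps its length up to a factor $1+o(1)$. The asserted decorrelation of distant grid points would require controlling pairs of trajectories through $\asymp N\log N$ random conformal maps; that is exactly the hard part, and it is left undone. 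The persistence claim in Step~3 is likewise unproved, and the bound $1-\exp(-cN^{\varepsilon})$ would need an independence you do not have.

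The missing idea is that no pointwise or pairwise geometry is needed, because the second moment closes after a single step. Conditionally on $\F_k$, with $A=\Mz_k(0)$, one has $Z_{k+1}=F_A(x_{k+1})$, where $F_A=D_\delta*\one_A$ is a deterministic function of the single uniform variable $x_{k+1}$. Write $F_A=\abs{A}-G_A$ with $G_A=(1-D_\delta)*\one_A$. Then $0\le G_A\le\int_{\T_1}(1-D_\delta)=2a_\delta$ and $\EE_x G_A=\frac{a_\delta}{\pi}\abs{A}$, hence
\[
  \EE\bigl[Z_{k+1}^2\mid\F_k\bigr]=q^2Z_k^2+\Var_x(F_A)\le q^2Z_k^2+2a_\delta\,\EE_x G_A=q^2Z_k^2+\frac{2a_\delta^2}{\pi}Z_k .
\]
Iterating with $\EE Z_k=2\pi q^k$ gives $\EE Z_k^2\le 4\pi^2q^{2k}+\frac{4a_\delta^2}{1-q}q^{k-1}$. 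Since $a_\delta=\pi(1-q)$, the error term is a factor $(1-q)q^{-k-1}=O(N^{-\varepsilon})$ of the main term for all $k\le m_N+O(N)$. This is the concentration you wanted in Step~2.

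Step~3 then follows from the monotonicity $Z_{k+1}\le Z_k$ and successive conditioning: the probability of no birth in steps $k_0,\dots,k_1-1$ together with $Z_{k_1}\ge z$ is at most $(1-z/2\pi)^{k_1-k_0}$. Alternatively, as in the paper, apply Paley--Zygmund to the late-birth count $Y_N=\sum_{k\ge m_N}B_k$. Its second moment is controlled, via $\EE[B_kZ_{k+1}]\le\frac1{2\pi}\EE Z_k^2$, by $\sum_{k\ge m_N}\EE Z_k^2$.
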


The upper bound in \eqref{eq:limit} was established in \cite[Thm.~1.5]{CPZ}: for every fixed
$\lambda>0$ and every $\varepsilon>0$,
\begin{equation}\label{eq:ub}
  \EE[\omega_{N,\lambda}]\le(1+\varepsilon)\frac{\log N}{2\lambda}
  \qquad\text{for all sufficiently large }N.
\end{equation}
The matching limit was conjectured in \cite[Rmk.~1.6]{CPZ}.  The main
contribution of this paper is the matching lower bound
\begin{equation}\label{eq:lb}
  \liminf_{N\to\infty}\frac{\EE[\omega_{N,\lambda}]}{\log N}\ge\frac{1}{2\lambda}.
\end{equation}
Combining \eqref{eq:lb} with \eqref{eq:ub} proves \Cref{thm:main} and resolves the conjecture.
The proof is given in \Cref{sec:reduction,sec:kernel,sec:proof}.

\subsection{Method}

Our proof begins with the reduction developed in \cite{CPZ}. This reduction couples the
growing forest to a Markov chain of \emph{marked configurations} and expresses
$\omega_{N,\lambda}$ in terms of a discrete \emph{coverage process}. We recall the formal
construction in \Cref{sec:reduction}; for the present discussion, it suffices to consider its
zero-colored marginal. On the rescaled circle $\T_1=\R/(2\pi\Z)$, let
$\Mz_k(0)\subseteq\T_1$ be the \emph{zero-colored set}, namely the union of base arcs that have not
yet generated a tree after the first $k$ particles have been placed, and set
\begin{equation}\label{eq:Zdef}
  Z_k:=\abs{\Mz_k(0)},\qquad Z_0=2\pi .
\end{equation}
A new tree is born at step $k+1$ exactly when the uniform position $x_{k+1}$ lies in
$\Mz_k(0)$. The recursion in \cite[Section~5]{CPZ} yields the exact first moment
\begin{equation}\label{eq:firstmoment}
  \EE Z_k=2\pi q^{k},\qquad q=1-\frac{a_\delta}{\pi},\qquad a_\delta=2\arctan\frac{\delta}{\sqrt{1-\delta^2}},
\end{equation}
By \eqref{eq:delta}, $a_\delta\sim\lambda/N$ and $1-q\sim\lambda/(\pi N)$. Hence the expected
zero-colored length reaches the scale $1/N$ when the number of particles is of order
$(\pi/\lambda)N\log N$. Converting particle number to physical time using the arrival rate
$2\pi N$ yields the scale $(\log N)/(2\lambda)$, and Markov's inequality gives the upper bound
\eqref{eq:ub}.

The first moment alone cannot yield a matching \emph{lower} bound: it shows that tree births
are unlikely after the critical scale, but it does not show that they persist up to that scale. To
prove \eqref{eq:lb}, we must show that new trees continue to appear until
$(1-o(1))(\pi/\lambda)N\log N$ particles have been added. Equivalently, we need a lower bound on the
last success time of the nonstationary coverage process. We obtain it by a second-moment argument.
Fix $\varepsilon\in(0,1)$, set
$m_N=\lfloor(1-\varepsilon)(\pi/\lambda)N\log N\rfloor$, and define
\begin{equation}\label{eq:YN}
  Y_N=\sum_{k\ge m_N}B_k,\qquad B_k=\one_{\{x_{k+1}\in\Mz_k(0)\}}.
\end{equation}
Thus $Y_N$ counts the tree births after the subcritical cutoff $m_N$. We prove that
$\EE Y_N\to\infty$ and $\EE Y_N^2\le(1+o(1))(\EE Y_N)^2$. The Paley--Zygmund inequality then gives
$\PP(Y_N>0)\to1$, and the resulting late birth yields the required lower bound after particle
number is converted to physical time.

The main technical input is a bound on the second moment of the zero-colored length. For a
deterministic set $A\subseteq\T_1$, let $\Sinv_x$ denote the zero-colored update associated with a
particle landing at $x$. The length after one step is
\[
  F_A(x)=\bigl|\Sinv_x(A)\bigr|=\int_A D_\delta(\theta-x)\,d\theta,
\]
where $D_\delta$, the inverse-slit derivative from \cite[\S2.1]{CPZ}, is given by
\[
D_{\delta}(u)
=
\sqrt{1-\delta^{2}}\,
\frac{\left|\tan\left(\frac{u}{2}\right)\right|}
{\sqrt{\tan^{2}\left(\frac{u}{2}\right)+\delta^{2}}},
\qquad
u\in\T_{1}\setminus\{0\}.
\]
Consequently, the one-step second-moment kernel is the quadratic functional
\[
  H(A)=\frac1{2\pi}\int_{\T_1}\!\Bigl(\int_A D_\delta(\theta-x)\,d\theta\Bigr)^2 dx
  =\EE\bigl[Z_{k+1}^2\mid\mathcal F_k\bigr]\big|_{A=\Mz_k(0)} .
\]
Because $F_A=D_\delta*\one_A$ is a circular convolution, Parseval's identity gives
\[
  H(A)=4\pi^2\sum_{n\in\Z}\widehat D_\delta(n)^2\,\abs{\widehat{\one_A}(n)}^2;
\]
see \Cref{lem:kernel}. The zero Fourier mode $\widehat D_\delta(0)=q$ yields the first-moment
term $q^2\abs{A}^2$. A deterministic contraction of the form $H(A)\le q^2\abs{A}^2$ would be
tempting, but it fails for every nontrivial $A$ (\Cref{lem:excess}). Instead, we prove the sharp
\emph{excess} estimate
\begin{equation}\label{eq:excessintro}
  0\le H(A)-q^2\abs{A}^2\le\frac{2a_\delta^2}{\pi}\abs{A}.
\end{equation}
The excess is precisely the contribution of the nonzero Fourier modes:
\[
  H(A)-q^2\abs{A}^2
  =4\pi^2\sum_{n\ne0}\widehat D_\delta(n)^2\,\abs{\widehat{\one_A}(n)}^2
  =\Var_x(F_A).
\]
The mass identity $\int_{\T_1}(1-D_\delta)=2a_\delta$ implies that this quantity is
$O(a_\delta^2)\abs{A}$; see \Cref{lem:excess}. Since $a_\delta^2=O(N^{-2})$, the excess is
negligible relative to $q^2\abs{A}^2$ at the lower-bound scale. Iterating
\eqref{eq:excessintro} yields the summed $L^2$ estimate
$\sum_{k\ge m_N}\EE Z_k^2\le(1+o(1))\sum_{k\ge m_N}(\EE Z_k)^2$, which closes the second-moment
argument. From the martingale perspective of \Cref{rem:mart}, the same mechanism shows that
$M_k=Z_k/q^k$ is a nonnegative martingale whose renormalized predictable quadratic variation has
expected size $o(1)$ up to the critical time.

\subsection{Organization and conventions}

In \Cref{sec:reduction}, we recall the coverage process and the marked-configuration
formalism from \cite{CPZ}, compute the first two moments of the late-birth count $Y_N$, and reduce
\eqref{eq:lb} to a summed $L^2$ estimate for $Z_k$. In \Cref{sec:kernel}, we derive the convolution
formula for the one-step kernel $H(A)$, prove the excess bound \eqref{eq:excessintro}, and obtain the
required summed $L^2$ estimate. Finally, \Cref{sec:proof} completes the Paley--Zygmund argument and
proves \Cref{thm:main}.

Throughout the paper, $\lambda>0$ is fixed and $N\to\infty$. Unless stated otherwise,
implicit constants and the notation $o(\cdot)$ and $O(\cdot)$ may depend on $\lambda$ (and on
$\varepsilon$ when it appears). We write $\F_k=\sigma(x_1,\dots,x_k)$ for the filtration generated
by the particle positions, $\abs{\cdot}$ for Lebesgue length on $\T_1$, and $\one_E$ for the
indicator of an event $E$. We use throughout the notation
\begin{equation}\label{eq:notation}
  a_\delta=2\arctan\frac{\delta}{\sqrt{1-\delta^2}},\qquad q=1-\frac{a_\delta}{\pi}\in(0,1),
\end{equation}
Then \eqref{eq:delta} gives
\begin{equation}\label{eq:1mq}
  a_\delta=\pi(1-q),\qquad 1-q=\frac{\lambda}{\pi N}+O_\lambda(N^{-3}).
\end{equation}

\section{The coverage process and the second-moment scheme}
\label{sec:reduction}

\subsection{The zero-colored set}

We recall the reduction of tree completion to a coverage process, following
\cite[\S2.1, \S3, \S5]{CPZ}. The coupling runs through the marked-configuration formalism of
\cite[\S3]{CPZ}, which we now state.

\begin{definition}[Marked configuration]\label{def:mark}
A \emph{marked configuration} $\mathtt M$ on $\T_1$ is an ordered tuple of closed arcs together with a
coloring,
\[
  \mathtt M=\bigl[(I_1,\dots,I_k),(a_1,\dots,a_k)\bigr],
\]
the colors $a_1,\dots,a_k$ being distinct nonzero integers; we write $\mathtt M(a_j)=I_j$ and say that
$I_j$ is marked by color $a_j$. The complementary set
$I_0(\mathtt M)=\overline{\T_1\setminus\bigcup_{j=1}^{k}I_j}$ is a finite union of disjoint closed
arcs, each assigned color $0$; we write $\mathtt M(0)=I_0(\mathtt M)$ and call it the
\emph{zero-colored set} of $\mathtt M$, and set $\mathtt n(\mathtt M)=k$ for its number of nonzero
colors.
\end{definition}

\begin{definition}[Marked configuration sequence]\label{def:markseq}
The \emph{marked configuration sequence} $\{\mathtt M_k\}_{k\ge0}$ generated by driving positions
$x_1,x_2,\dots\in\T_1$ is given by $\mathtt M_0=\emptyset$ (a single arc of color $0$) and
$\mathtt M_{k+1}=S^{\mathtt{inv}}_{x_{k+1}}(\mathtt M_k)$, where on
$\mathtt M_k=[(I_1,\dots,I_k),(1,\dots,k)]$
\[
  S^{\mathtt{inv}}_{x}(\mathtt M_k)=
  \begin{cases}
    \bigl[(S^{\mathtt{inv}}_{x}I_1,\dots,S^{\mathtt{inv}}_{x}I_k),\,(1,\dots,k)\bigr],
      & x\in\bigcup_{j}I_j,\\[4pt]
    \bigl[(S^{\mathtt{inv}}_{x}I_1,\dots,S^{\mathtt{inv}}_{x}I_k,\,S^{\mathtt{inv}}_{x}\{x\}),\,(1,\dots,k+1)\bigr],
      & x\notin\bigcup_{j}I_j,
  \end{cases}
\]
and $S^{\mathtt{inv}}_x$ is the inverse slit map of \cite[\S2.1]{CPZ} (its derivative $D_\delta$ is
recorded in \eqref{eq:Ddef} below). By \cite[\S3]{CPZ} this sequence couples to $\mathrm{CHL}_N$ so
that each nonzero color tracks the inverse harmonic-measure interval of one tree; hence
$\mathtt n(\mathtt M_k)$ is the number of trees after $k$ particles, and a new color --- a new tree
--- appears at step $k+1$ exactly when $x_{k+1}$ falls in $\mathtt M_k(0)$.
\end{definition}

The coverage process is the zero-colored marginal $\Mz_k(0)$. After $k$ particles have
been placed, the zero-colored set $\Mz_k(0)\subseteq\T_1$ --- the union of arcs of the (rescaled) base
circle from which no tree has yet grown --- has length $Z_k=\abs{\Mz_k(0)}$ as in \eqref{eq:Zdef}. A
new tree is born at step $k+1$
precisely when the uniformly placed position $x_{k+1}\in\T_1$ falls in $\Mz_k(0)$, so with $B_k$ as
in \eqref{eq:YN},
\begin{equation}\label{eq:birthprob}
  \PP(B_k=1\mid\F_k)=\frac{Z_k}{2\pi}.
\end{equation}
When a particle lands at $x$, the zero-colored set is updated by the broadened inverse slit map
$\Sinv_x$: every zero arc $J$ with $x\notin J$ is pulled back to $S^{\mathtt{inv}}_x(J)$, while the arc
$J=[a,b]$ with $x\in J$ is split into the two side arcs
\[
  \Sinv_x(J)=\bigl[S^{\mathtt{inv}}_x(a),\,x-a_\delta\bigr]\sqcup\bigl[x+a_\delta,\,S^{\mathtt{inv}}_x(b)\bigr],
\]
the newly born interval of length $2a_\delta$ around $x$ being deleted (it becomes a new nonzero
color); summing over the zero arcs gives $\Mz_{k+1}(0)=\Sinv_{x_{k+1}}(\Mz_k(0))$. The Section~5
recursion of \cite{CPZ} states that
\begin{equation}\label{eq:recursion}
  \EE\bigl[Z_{k+1}\mid\F_k\bigr]=q\,Z_k,\qquad\text{hence}\qquad \EE Z_k=2\pi q^{k},
\end{equation}
which is \eqref{eq:firstmoment}. Finally, writing $T_{\mathrm{tree}}$ for the index of the last
particle that creates a new tree, one has $\omega_{N,\lambda}=t_{T_{\mathrm{tree}}}$, and since the
arrival times are an independent rate-$2\pi N$ Poisson process,
\begin{equation}\label{eq:count}
  \EE[\omega_{N,\lambda}]=\frac{\EE[T_{\mathrm{tree}}]}{2\pi N}.
\end{equation}
Formulae \eqref{eq:birthprob}--\eqref{eq:count} are exactly \cite[\S5]{CPZ} and are used as a black
box. We also record the pointwise contraction of the zero set, an immediate consequence of the
inverse-slit interval estimates of \cite[\S2.1]{CPZ}.

\begin{lemma}[Deterministic monotonicity]\label{lem:mono}
Almost surely, $Z_{k+1}\le Z_k$ for every $k\ge0$.
\end{lemma}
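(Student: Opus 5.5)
The plan is to show that the one-step update $\Sinv_x$ never increases the length of a zero arc, and then sum over the finitely many zero arcs that make up $\Mz_k(0)$. Since $\Mz_{k+1}(0)=\Sinv_{x_{k+1}}(\Mz_k(0))$ holds for every realization, it is enough to prove the following deterministic claim. For every finite union $A$ of disjoint closed arcs of $\T_1$ and every $x\in\T_1$,
\[
\bigl|\Sinv_x(A)\bigr|\le\abs{A}.
\]
Applying this with $A=\Mz_k(0)$ and $x=x_{k+1}$ then gives $Z_{k+1}\le Z_k$ surely, hence almost surely.

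\textbf{Arcs not containing $x$.} Take a zero arc $J$ with $x\notin J$. Its image is the arc $S^{\mathtt{inv}}_x(J)$, whose length is $\int_J D_\delta(\theta-x)\,d\theta$ by the change of variables encoded in $D_\delta$. The key point is that $D_\delta(u)\le 1$ for all $u\ne 0$. Indeed,
\[
\sqrt{1-\delta^2}\le1
\qquad\text{and}\qquad
\frac{|\tan(u/2)|}{\sqrt{\tan^2(u/2)+\delta^2}}\le1,
\]
and $D_\delta$ is the product of these two factors. Hence $|S^{\mathtt{inv}}_x(J)|\le|J|$.

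\textbf{The arc containing $x$.} For the arc $J=[a,b]\ni x$, the two side arcs $[S^{\mathtt{inv}}_x(a),x-a_\delta]$ and $[x+a_\delta,S^{\mathtt{inv}}_x(b)]$ are exactly the images under $S^{\mathtt{inv}}_x$ of $[a,x)$ and $(x,b]$. This uses the normalization of the inverse slit map from \cite[\S2.1]{CPZ}: the one-sided limits at $x$ are $x\mp a_\delta$, which is the statement that the slit's two sides map to the arc of length $2a_\delta$. The total length of the side arcs is therefore again $\int_J D_\delta(\theta-x)\,d\theta\le|J|$, where we use that $D_\delta$ is integrable near $0$ (it is even bounded there). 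One can also see this directly: the deleted middle piece has length $2a_\delta>0$, so the images lose length rather than gain it.

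\textbf{Summing over arcs.} The pieces $\Sinv_x(J)$ are pairwise disjoint up to endpoints, because $S^{\mathtt{inv}}_x$ is injective on $\T_1\setminus\{x\}$. Summing the bounds over the zero arcs gives $|\Sinv_x(A)|\le|A|$.

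The main obstacle is the arc containing $x$. There one has to justify, from the inverse-slit interval estimates of \cite[\S2.1]{CPZ}, that the endpoint formulas with $x\pm a_\delta$ agree with the integral of $D_\delta$ over $[a,x)$ and $(x,b]$. The alternative would be to show directly that the side arcs lie within $S^{\mathtt{inv}}_x$-images of subarcs of $J$. Once this is settled, the rest is the elementary inequality $D_\delta\le1$.
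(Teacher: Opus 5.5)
Your proposal is correct and is essentially the paper's argument: you bound the update of each zero arc separately, handle the arc containing $x$ by discarding the newly born interval of length $2a_\delta$, and sum over arcs. The only difference is bookkeeping. You derive the per-arc contraction from the pointwise bound $D_\delta\le1$ together with the side-arc integral identity, which the paper itself proves in \Cref{lem:kernel}. The paper instead quotes the inverse-slit interval estimates of \cite{CPZ}, namely $\sqrt{1-\delta^2}\abs{J}$ and $2a_\delta+(1-\tfrac12\delta^2)\abs{J}$, and then subtracts $2a_\delta$.
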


\begin{proof}
It suffices to bound the update of a single zero arc $J$ and sum. If the particle position $x\notin
J$, the inverse-slit estimate of \cite[\S2.1]{CPZ} gives $\abs{S_x^{\mathtt{inv}}(J)}\le
\sqrt{1-\delta^2}\,\abs{J}\le\abs{J}$. If $x\in J$, the same section gives
$\abs{S_x^{\mathtt{inv}}(J)}\le 2a_\delta+(1-\tfrac12\delta^2)\abs{J}$, and the zero-colored update
removes the newly born interval $S_x^{\mathtt{inv}}(\{x\})$ of length $2a_\delta$, so
$\bigl|\Sinv_x(J)\bigr|\le(1-\tfrac12\delta^2)\abs{J}\le\abs{J}$. Summing over the finitely many
disjoint zero arcs comprising $\Mz_k(0)$ gives $Z_{k+1}\le Z_k$.
\end{proof}

\subsection{First and second moments of the late-birth count}

We fix $\varepsilon\in(0,1)$ and the subcritical cutoff
\begin{equation}\label{eq:mN}
  m_N=\left\lfloor(1-\varepsilon)\frac{\pi}{\lambda}N\log N\right\rfloor,
\end{equation}
and study $Y_N=\sum_{k\ge m_N}B_k$ from \eqref{eq:YN}.

\begin{proposition}[First moment]\label{prop:first}
With $m_N$ as in \eqref{eq:mN},
\[
  \EE Y_N=\frac{q^{m_N}}{1-q}=\frac{\pi}{\lambda}N^{\varepsilon}(1+o(1))\xrightarrow[N\to\infty]{}\infty .
\]
\end{proposition}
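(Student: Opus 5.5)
The plan is to compute $\EE Y_N$ directly from the conditional birth probability \eqref{eq:birthprob} and the exact first moment \eqref{eq:recursion}, and then to extract the asymptotics from \eqref{eq:1mq}.

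\emph{Summing the first moments.} Since $B_k\ge0$, monotone convergence (Tonelli) lets us exchange expectation and the infinite sum. By the tower property and \eqref{eq:birthprob},
\[
  \EE B_k=\EE\bigl[\PP(B_k=1\mid\F_k)\bigr]=\frac{\EE Z_k}{2\pi}=q^k .
\]
Because $q\in(0,1)$ by \eqref{eq:notation}, the geometric series converges and
\[
  \EE Y_N=\sum_{k\ge m_N}q^k=\frac{q^{m_N}}{1-q}.
\]
This gives the exact identity.

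\emph{Asymptotics.} By \eqref{eq:1mq}, $1-q=\frac{\lambda}{\pi N}\bigl(1+O(N^{-2})\bigr)$, so
\[
  \frac{1}{1-q}=\frac{\pi N}{\lambda}(1+o(1)).
\]
For the numerator, write $\log q=-(1-q)+O((1-q)^2)=-\frac{\lambda}{\pi N}+O(N^{-2})$. Then
\[
  m_N\log q=-\Bigl((1-\varepsilon)\frac{\pi}{\lambda}N\log N+O(1)\Bigr)\Bigl(\frac{\lambda}{\pi N}+O(N^{-2})\Bigr)=-(1-\varepsilon)\log N+O\Bigl(\frac{\log N}{N}\Bigr)+O\Bigl(\frac1N\Bigr).
\]
The $O(1)$ here absorbs the floor in \eqref{eq:mN}. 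Hence $q^{m_N}=N^{-(1-\varepsilon)}(1+o(1))$. Multiplying the two factors gives
\[
  \EE Y_N=\frac{\pi}{\lambda}N^{\varepsilon}(1+o(1)),
\]
which tends to $\infty$ because $\varepsilon>0$.

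\emph{Main obstacle.} There is no real obstacle. The only point needing care is the error bookkeeping in $m_N\log q$. The product of the $N\log N$ scale with the $O(N^{-2})$ corrections must be $o(1)$ in the exponent, and the floor must contribute only $O(1/N)$. Both follow from \eqref{eq:1mq}, since its error term is $O(N^{-3})$, which is even better than what is needed.
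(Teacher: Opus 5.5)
Your proposal is correct and follows essentially the same route as the paper: $\EE B_k=\EE Z_k/2\pi=q^k$, summation of the geometric series, and the expansion $q^{m_N}=\exp\{m_N\log(1-(1-q))\}=N^{-(1-\varepsilon)}(1+o(1))$ via \eqref{eq:1mq}, combined with $1/(1-q)=(\pi N/\lambda)(1+o(1))$. Your error bookkeeping is a more explicit version of the paper's step $m_N(1-q)=(1-\varepsilon)\log N+o(1)$. One small correction to your closing remark: the $O(N^{-2})$ term in $\log q$ comes from the quadratic Taylor term $-(1-q)^2/2$, not from the $O(N^{-3})$ error in \eqref{eq:1mq}, but either way it contributes only $O(\log N/N)$ to the exponent.
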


\begin{proof}
By \eqref{eq:birthprob} and \eqref{eq:recursion}, $\EE B_k=\EE Z_k/2\pi=q^{k}$, hence
$\EE Y_N=\sum_{k\ge m_N}q^{k}=q^{m_N}/(1-q)$. From \eqref{eq:1mq}, $m_N(1-q)=(1-\varepsilon)\log
N+o(1)$, so
\[
  q^{m_N}=\exp\{m_N\log(1-(1-q))\}=\exp\{-(1-\varepsilon)\log N+o(1)\}=N^{-(1-\varepsilon)}(1+o(1)).
\]
Since $1/(1-q)=(\pi N/\lambda)(1+o(1))$, we obtain
$\EE Y_N=N^{-(1-\varepsilon)}(1+o(1))\cdot(\pi N/\lambda)(1+o(1))=(\pi/\lambda)N^{\varepsilon}(1+o(1))$.
\end{proof}

\begin{proposition}[Second-moment identity]\label{prop:second}
For every deterministic cutoff $m$, with $Y_m=\sum_{k\ge m}B_k$,
\begin{equation}\label{eq:secondid}
  \EE Y_m^2=\EE Y_m+\frac{1}{\pi(1-q)}\sum_{k\ge m}\EE\bigl[B_kZ_{k+1}\bigr].
\end{equation}
\end{proposition}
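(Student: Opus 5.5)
Expand $Y_m^2=\sum_k B_k^2+2\sum_{k<j}B_kB_j$ with $j,k\ge m$. Since $B_k\in\{0,1\}$, the diagonal sum is $Y_m$, and its expectation is $\EE Y_m$. For the off-diagonal part, fix $k$ and condition on $\F_j$ for each $j>k$. The variable $B_k$ is $\F_{k+1}\subseteq\F_j$-measurable, so \eqref{eq:birthprob} gives
\[
  \EE[B_kB_j]=\EE\bigl[B_k\,\EE[B_j\mid\F_j]\bigr]=\frac1{2\pi}\EE\bigl[B_kZ_j\bigr].
\]
Next, iterate the recursion \eqref{eq:recursion} from time $k+1$. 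For $j\ge k+1$, the tower property with $B_k$ $\F_{k+1}$-measurable yields $\EE[B_kZ_j]=q^{\,j-k-1}\EE[B_kZ_{k+1}]$. The induction step uses only $\EE[B_kZ_{i+1}]=\EE[B_k\EE[Z_{i+1}\mid\F_i]]=q\,\EE[B_kZ_i]$ for $i\ge k+1$.

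Summing over $j>k$ gives a geometric series:
\[
  \sum_{j>k}\EE[B_kB_j]=\frac1{2\pi}\EE[B_kZ_{k+1}]\sum_{j\ge k+1}q^{j-k-1}=\frac{1}{2\pi(1-q)}\EE[B_kZ_{k+1}].
\]
Multiplying by $2$ and summing over $k\ge m$ produces exactly the coefficient $\frac{1}{\pi(1-q)}$ in \eqref{eq:secondid}.

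The only delicate point is justifying the interchange of the infinite sums with expectation, since $Y_m$ is an infinite sum. All terms are nonnegative, so Tonelli/monotone convergence applies directly. The resulting quantities are also finite: $\EE Y_m=q^m/(1-q)<\infty$ by the computation in \Cref{prop:first}, and $\EE[B_kZ_{k+1}]\le 2\pi q^k$ by \Cref{lem:mono}, since $B_kZ_{k+1}\le B_kZ_k$ and $\EE[B_kZ_k]\le\EE Z_k$. This makes the right-hand side finite, and the identity holds as an equality of finite numbers. I expect no real obstacle beyond this bookkeeping. The substantive work is deferred to bounding $\EE[B_kZ_{k+1}]$ through the kernel $H$.
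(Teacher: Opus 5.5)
Your proof is correct and follows essentially the same route as the paper. Both arguments expand the square, use that $B_k$ is $\F_{k+1}$-measurable, and apply the recursion \eqref{eq:recursion} together with \eqref{eq:birthprob} to obtain $\EE[B_kB_j]=\frac{q^{\,j-k-1}}{2\pi}\EE[B_kZ_{k+1}]$, then sum the geometric series. The only cosmetic difference is that you apply Tonelli directly to the nonnegative infinite double sum, whereas the paper truncates at $L$ and passes to the limit by monotone convergence; both are equally valid.
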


\begin{proof}
We work with the truncations $Y_{m,L}=\sum_{k=m}^{L}B_k$ and let $L\to\infty$ at the end; since
$0\le Y_{m,L}\uparrow Y_m$, monotone convergence gives $\EE Y_{m,L}\to\EE Y_m$ and $\EE
Y_{m,L}^2\to\EE Y_m^2$, so it suffices to prove \eqref{eq:secondid} for each finite $L$ and pass to
the limit (all sums below then being finite).

Let $m\le k<l\le L$. Conditioning on $\F_{k+1}$ and applying the first-moment recursion
\eqref{eq:recursion} to the evolution after step $k+1$ gives $\EE[Z_l\mid\F_{k+1}]=q^{\,l-k-1}Z_{k+1}$
--- this is licensed by the configuration-independence of the rate $q$ in \eqref{eq:recursion}, the
future driving positions being independent uniform samples --- whence by \eqref{eq:birthprob},
\[
  \EE[B_l\mid\F_{k+1}]=\frac{\EE[Z_l\mid\F_{k+1}]}{2\pi}=\frac{q^{\,l-k-1}Z_{k+1}}{2\pi}.
\]
Because $B_k=\one_{\{x_{k+1}\in\Mz_k(0)\}}$ is $\F_{k+1}$-measurable (it depends on $x_1,\dots,x_{k+1}$),
the tower property yields
\[
  \EE[B_kB_l]=\EE\bigl[B_k\,\EE(B_l\mid\F_{k+1})\bigr]=\frac{q^{\,l-k-1}}{2\pi}\,\EE[B_kZ_{k+1}].
\]
Therefore, summing the off-diagonal terms and using
$\sum_{r\ge1}q^{\,r-1}=1/(1-q)$,
\[
  \EE Y_{m,L}^2=\sum_{k=m}^{L}\EE B_k+2\!\!\sum_{m\le k<l\le L}\!\!\EE[B_kB_l]
  =\EE Y_{m,L}+\frac1{\pi}\sum_{k=m}^{L}\EE[B_kZ_{k+1}]\sum_{r=1}^{L-k}q^{\,r-1}.
\]
Letting $L\to\infty$ (monotone convergence, and $\sum_{r=1}^{L-k}q^{r-1}\uparrow1/(1-q)$) gives
\eqref{eq:secondid}.
\end{proof}

The identity \eqref{eq:secondid} exposes the only nontrivial quantity, the birth--zero correlation
$\sum_k\EE[B_kZ_{k+1}]$. By \Cref{lem:mono} it is controlled by the plain second moment of $Z_k$:
since $Z_{k+1}\le Z_k$ and $\EE[B_k\mid\F_k]=Z_k/2\pi$,
\begin{equation}\label{eq:corrbound}
  \EE[B_kZ_{k+1}]\le\EE[B_kZ_k]=\EE\bigl[Z_k\,\EE(B_k\mid\F_k)\bigr]=\frac1{2\pi}\EE[Z_k^2].
\end{equation}
Thus the second moment of $Y_N$ is governed by the summed $L^2$-norm $\sum_{k\ge m_N}\EE Z_k^2$,
which we estimate in the next section.

\section{The one-step second-moment kernel and the summed \texorpdfstring{$L^2$}{L2} estimate}
\label{sec:kernel}

Throughout this section $A=\bigsqcup_{i=1}^{\ell}J_i\subset\T_1$ is a deterministic finite union of
arcs. We first record the exact one-step second-moment kernel, then prove that its deterministic
excess over the first-moment rate is small, and finally iterate to a summed $L^2$ bound.

\subsection{The convolution formula}

Following \cite[\S2.1]{CPZ}, the inverse slit map has derivative $D_\delta$, where
\begin{equation}\label{eq:Ddef}
  D_\delta(u)=\sqrt{1-\delta^2}\,\frac{\abs{\tan(u/2)}}{\sqrt{\tan^2(u/2)+\delta^2}}
  \quad(u\in\T_1\setminus\{0\}),\qquad D_\delta(0)=0,
\end{equation}
so that $\tfrac{d}{d\theta}S_0^{\mathtt{inv}}(\theta)=D_\delta(\theta)$ and, by rotation
invariance, $\tfrac{d}{d\theta}S_x^{\mathtt{inv}}(\theta)=D_\delta(\theta-x)$. Note $0\le
D_\delta\le\sqrt{1-\delta^2}\le1$, $D_\delta$ is even about $0$ (i.e.\ $D_\delta(2\pi-u)=D_\delta(u)$),
and
\begin{equation}\label{eq:Dmass}
  \int_0^{2\pi}D_\delta(u)\,du=2\pi-2a_\delta=2\pi q .
\end{equation}
For a particle at $x$, write $F_A(x)=\bigl|\Sinv_x(A)\bigr|$ for the length of the zero set after one
step, and define the one-step $L^2$ kernel
\begin{equation}\label{eq:Hdef}
  H(A)=\frac1{2\pi}\int_{\T_1}F_A(x)^2\,dx .
\end{equation}
Since $x_{k+1}$ is uniform on $\T_1$ and $\Mz_{k+1}(0)=\Sinv_{x_{k+1}}(\Mz_k(0))$, taking
$A=\Mz_k(0)$ gives the probabilistic meaning
\begin{equation}\label{eq:Hprob}
  \EE\bigl[Z_{k+1}^2\mid\F_k\bigr]=H\bigl(\Mz_k(0)\bigr).
\end{equation}

\begin{lemma}[Kernel formula]\label{lem:kernel}
For every finite union $A\subset\T_1$, and up to endpoint null sets,
\begin{equation}\label{eq:FA}
  F_A(x)=\int_A D_\delta(\theta-x)\,d\theta,\qquad x\in\T_1 .
\end{equation}
Consequently, with the Fourier normalization
$\widehat f(n)=\tfrac1{2\pi}\int_{\T_1}f(x)e^{-inx}\,dx$,
\begin{equation}\label{eq:Hfourier}
  H(A)=4\pi^2\sum_{n\in\Z}\widehat D_\delta(n)^2\,\bigl|\widehat{\one_A}(n)\bigr|^2
  =q^2\abs{A}^2+4\pi^2\sum_{n\ne0}\widehat D_\delta(n)^2\,\bigl|\widehat{\one_A}(n)\bigr|^2 .
\end{equation}
\end{lemma}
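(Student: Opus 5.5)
We prove \eqref{eq:FA} one zero arc at a time and then sum over arcs; \eqref{eq:Hfourier} then follows from Parseval applied to a circular convolution.

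\emph{Step 1: the case $x\notin J$.} Fix a zero arc $J=[a,b]$ of $A$. If $x\notin J$, then $S^{\mathtt{inv}}_x$ is a $C^1$ increasing homeomorphism on $J$ with derivative $D_\delta(\theta-x)$. The fundamental theorem of calculus gives
\[
\bigl|S^{\mathtt{inv}}_x(J)\bigr|=\int_J D_\delta(\theta-x)\,d\theta .
\]

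\emph{Step 2: the case $x\in J$.} Here $D_\delta(\theta-x)$ is continuous on each of $[a,x)$ and $(x,b]$, and $S^{\mathtt{inv}}_x$ jumps at $x$ by the newly born interval $S^{\mathtt{inv}}_x(\{x\})=[x-a_\delta,x+a_\delta]$. Integrating on each side gives
\[
S^{\mathtt{inv}}_x(x^-)-S^{\mathtt{inv}}_x(a)=\int_a^x D_\delta(\theta-x)\,d\theta,
\]
and symmetrically on $(x,b]$. By the description of $\Sinv_x(J)$, the two side arcs have endpoints $S^{\mathtt{inv}}_x(a)$, $x-a_\delta=S^{\mathtt{inv}}_x(x^-)$ and $x+a_\delta=S^{\mathtt{inv}}_x(x^+)$, $S^{\mathtt{inv}}_x(b)$. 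Adding the two lengths gives $|\Sinv_x(J)|=\int_J D_\delta(\theta-x)\,d\theta$.

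\emph{Step 3: summing over arcs.} The images of distinct zero arcs are disjoint up to endpoints, since $S^{\mathtt{inv}}_x$ is injective off the deleted interval. Summing the identities of Steps 1 and 2 over the arcs $J_i$ gives \eqref{eq:FA} for all $x$ outside the finite set of arc endpoints, which is the ``up to endpoint null sets'' caveat.

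The main obstacle is Step 2: identifying the one-sided limits $S^{\mathtt{inv}}_x(x^\pm)$ with $x\pm a_\delta$. The first thing I would try is to read this off the explicit inverse slit map of \cite[\S2.1]{CPZ}. As a consistency check, integrating $1-D_\delta$ over $\T_1$ reproduces the total jump $2a_\delta$, matching \eqref{eq:Dmass}.

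\emph{Step 4: the Fourier formula.} By \eqref{eq:FA} and evenness of $D_\delta$, $F_A(x)=\int \one_A(\theta)D_\delta(x-\theta)\,d\theta=(\one_A*D_\delta)(x)$. With the stated normalization, $\widehat{f*g}(n)=2\pi\widehat f(n)\widehat g(n)$, and Parseval reads $\frac1{2\pi}\int|f|^2=\sum_n|\widehat f(n)|^2$. Therefore
\[
H(A)=\sum_n 4\pi^2\,\widehat D_\delta(n)^2\,\bigl|\widehat{\one_A}(n)\bigr|^2,
\]
where $\widehat D_\delta(n)$ is real because $D_\delta$ is even. For the zero mode, $\widehat D_\delta(0)=q$ by \eqref{eq:Dmass} and $\widehat{\one_A}(0)=|A|/2\pi$, so the $n=0$ term equals $q^2|A|^2$. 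This gives the second equality in \eqref{eq:Hfourier}.
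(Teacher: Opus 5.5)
Your proposal is correct and follows the paper's proof essentially step for step. The steps match: integrate $D_\delta(\theta-x)$ arc by arc, splitting at $x$ into the two side arcs; use disjointness of the images to sum; then use $F_A=D_\delta*\one_A$, the convolution rule, Parseval, and $\widehat D_\delta(0)=q$ for the zero mode. The point you flag, $S^{\mathtt{inv}}_x(x^\pm)=x\pm a_\delta$, is used implicitly by the paper and does check out. In the coordinate $t=\tan(u/2)$ the inverse slit map is $t\mapsto\operatorname{sign}(t)\sqrt{(t^2+\delta^2)/(1-\delta^2)}$, and its one-sided limits at $t=0$ correspond to $u=\pm2\arctan(\delta/\sqrt{1-\delta^2})=\pm a_\delta$.
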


\begin{proof}
Let \(A=\bigsqcup_{i=1}^{m}J_i\), where  \(J_i\) is an arc on $\mathbb T_1$.  For \(x\notin A\), the derivative formula gives
\[
\bigl|S_x^{\mathtt{inv}}(J_i)\bigr|
=
\int_{J_i}D_\delta(\theta-x)\,d\theta.
\]
If \(x\in J_\ell=[\alpha_\ell,\beta_\ell]\), choose
\(\alpha_\ell<x<\beta_\ell\).  The zero-colored update splits \(J_\ell\) into the two
side intervals, and hence
\[
\begin{aligned}
\bigl|\widehat S_x^{\mathtt{inv}}(J_\ell)\bigr|
&=
\bigl|[S_x^{\mathtt{inv}}(\alpha_\ell),x-a_\delta]\bigr|
+\bigl|[x+a_\delta,S_x^{\mathtt{inv}}(\beta_\ell)]\bigr|\\
&=
\int_{\alpha_\ell}^{x}D_\delta(\theta-x)\,d\theta
+\int_x^{\beta_\ell}D_\delta(\theta-x)\,d\theta
=
\int_{J_\ell}D_\delta(\theta-x)\,d\theta.
\end{aligned}
\]
Since the inverse boundary map preserves cyclic order on the cut circle, the images of
distinct arcs are disjoint up to endpoints.  Summing their lengths therefore yields
\eqref{eq:FA}.

For \eqref{eq:Hfourier}, \eqref{eq:FA} states that $F_A=D_\delta*\one_A$ is the circular convolution
$(D_\delta*\one_A)(x)=\int_{\T_1}D_\delta(x-y)\one_A(y)\,dy$ (using that $D_\delta$ is even), whence
$\widehat{F_A}(n)=2\pi\,\widehat D_\delta(n)\,\widehat{\one_A}(n)$. Parseval's identity
$\tfrac1{2\pi}\int_{\T_1}\abs{F_A}^2=\sum_n\bigl|\widehat{F_A}(n)\bigr|^2$ then gives the first
equality in \eqref{eq:Hfourier}. By \eqref{eq:Dmass}, $\widehat D_\delta(0)=q$, and
$\widehat{\one_A}(0)=\abs{A}/2\pi$, so the $n=0$ term equals
$4\pi^2q^2(\abs{A}/2\pi)^2=q^2\abs{A}^2$; this is the second equality.
\end{proof}

\subsection{The deterministic excess bound}

\begin{lemma}[Excess bound]\label{lem:excess}
For every finite union $A\subset\T_1$,
\begin{equation}\label{eq:excess}
  0\le H(A)-q^2\abs{A}^2\le\frac{2a_\delta^2}{\pi}\abs{A}.
\end{equation}
The lower bound is an equality only when $D_\delta*\one_A$ is constant; in particular the
deterministic contraction $H(A)\le q^2\abs{A}^2$ \emph{fails} in general --- for instance for every
sufficiently short arc.
\end{lemma}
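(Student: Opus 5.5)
Write $E(A):=H(A)-q^2\abs{A}^2$. By \Cref{lem:kernel}, $E(A)=4\pi^2\sum_{n\ne0}\widehat D_\delta(n)^2\abs{\widehat{\one_A}(n)}^2$. The plan is to identify $E(A)$ as a variance and then control it through the defect $1-D_\delta$, whose total mass is only $2a_\delta$.

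\textbf{Variance identity and lower bound.} Since $\frac1{2\pi}\int F_A=2\pi\widehat{F_A}(0)=q\abs{A}$ and $H(A)=\frac1{2\pi}\int F_A^2$, we have
\[
E(A)=\Var_x(F_A),
\]
the variance of $F_A(x)$ under the uniform law of $x$. Hence $E(A)\ge0$, with equality iff $F_A=D_\delta*\one_A$ is a.e. constant. The same conclusion follows from the Fourier form, since it is a sum of nonnegative terms.

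\textbf{Upper bound.} Set $G=1-D_\delta\ge0$. By \eqref{eq:Dmass}, $\int_{\T_1}G=2a_\delta$, and $0\le G\le1$. Then
\[
F_A(x)=\abs{A}-(G*\one_A)(x),
\]
so $\Var(F_A)=\Var(G*\one_A)\le\frac1{2\pi}\int (G*\one_A)^2$. For the last integral, use the pointwise bound $0\le G*\one_A\le \int G=2a_\delta$, which gives
\[
\frac1{2\pi}\int (G*\one_A)^2\le \frac{2a_\delta}{2\pi}\int G*\one_A=\frac{2a_\delta}{2\pi}\cdot 2a_\delta\abs{A}=\frac{2a_\delta^2}{\pi}\abs{A}.
\]
This is exactly \eqref{eq:excess}, so the argument should close without any Fourier analysis of $\widehat D_\delta$. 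I expect this step to be routine. The only points to check are that $G$ is nonnegative (true because $D_\delta\le1$) and that the mass identity \eqref{eq:Dmass} is used with the right normalization.

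\textbf{Failure of contraction for short arcs.} Equality in the lower bound would require $D_\delta*\one_A$ to be constant, i.e.\ $G*\one_A$ constant. For a short arc $A=[0,\ell]$:
\begin{itemize}
\item At $x$ far from $A$ (distance $\gg\delta$), $G(\theta-x)$ is small on $A$, because $D_\delta(u)\to\sqrt{1-\delta^2}$ away from $0$. Thus $G*\one_A(x)\approx(1-\sqrt{1-\delta^2})\ell$.
\item At $x$ equal to the midpoint of $A$, with $\ell\gtrsim\delta$, the bump of $G$ near $0$, which has mass of order $a_\delta$, lies inside $A$. This gives $G*\one_A(x)\gtrsim a_\delta$.
\end{itemize}
So $G*\one_A$ is nonconstant, and hence $E(A)>0$, i.e.\ $H(A)>q^2\abs{A}^2$.

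More generally, for any $A$ with $0<\abs{A}<2\pi$, nonconstancy holds because some Fourier coefficient $\widehat{\one_A}(n)\neq0$ with $n\ne0$, and one checks $\widehat D_\delta(n)\ne0$. The latter is the only delicate point; if it is troublesome, the direct short-arc comparison above suffices for the stated ``in particular'' claim.
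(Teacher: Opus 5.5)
Your lower bound (the identification $H(A)-q^2\abs{A}^2=\Var_x(F_A)$), the equality characterization, and the upper bound are the paper's argument. The upper bound in particular writes $F_A=\abs{A}-G*\one_A$ with $G=1-D_\delta\ge0$ and uses $0\le G*\one_A\le 2a_\delta$ and $\int_{\T_1}G*\one_A=2a_\delta\abs{A}$; that part is correct as written.

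The gap is in the claim that contraction fails for short arcs. As written it is a heuristic, not a proof.
\begin{itemize}
\item Your two-point comparison uses only $\approx$ and $\gtrsim$.
\item It evaluates at points at distance $\gg\delta$ from $A$, which presupposes $\delta$ small, whereas the lemma holds for any fixed $\delta\in(0,1)$.
\item The midpoint bound assumes $\ell\gtrsim\delta$. For fixed $\delta$, ``every sufficiently short arc'' includes all arcs with $\ell\ll\delta$, and your argument does not cover them.
\end{itemize}
The paper avoids these issues by a limit argument. As $L\downarrow0$ one has $F_{A_L}(x)/L\to D_\delta(x)$ boundedly, so
\[
\bigl(H(A_L)-q^2L^2\bigr)/L^2\to\tfrac1{2\pi}\int_{\T_1}D_\delta^2-q^2 .
\]
This limit is positive by strict Cauchy--Schwarz, because $D_\delta$ is nonconstant. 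The argument works for every $\delta$ and shows the excess is of order $L^2$.

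Your pointwise idea can be repaired rigorously. Write $D_\delta(u)=\sqrt{1-\delta^2}\,h\bigl(\tan^2(u/2)\bigr)$ with $h(t)=\sqrt{t/(t+\delta^2)}$ strictly increasing. Then $G$ is strictly decreasing in the circular distance $\abs{u}$ on $[0,\pi]$. For $A=[0,\ell]$ with $0<\ell<\pi$,
\[
(G*\one_A)(\ell/2)=\int_{-\ell/2}^{\ell/2}G(u)\,du>\int_{\pi-\ell/2}^{\pi+\ell/2}G(u)\,du=(G*\one_A)(\ell/2+\pi),
\]
because every point of the first range is strictly closer to $0$ on $\T_1$ than every point of the second. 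Since $G*\one_A$ is continuous, it is not a.e.\ constant, so the excess is strictly positive. This covers every arc of length less than $\pi$ for every $\delta$, though without the paper's quantitative $L^2$ rate.

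Your Fourier aside is logically incomplete as stated. It needs $\widehat D_\delta(n)\neq0$ at some $n\neq0$ with $\widehat{\one_A}(n)\neq0$, which in general means for all $n\neq0$, and you do not verify this. Since nothing depends on it, drop it.
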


\begin{proof}
By \eqref{eq:Hfourier}, $H(A)-q^2\abs{A}^2=4\pi^2\sum_{n\ne0}\widehat D_\delta(n)^2
\bigl|\widehat{\one_A}(n)\bigr|^2\ge0$, with equality iff every nonzero Fourier coefficient of
$F_A=D_\delta*\one_A$ vanishes, i.e.\ iff $F_A$ is constant. Writing $z=\abs{A}$, this excess is the
variance of $F_A$ under a uniform sample $x$:
\[
  H(A)-q^2z^2=\frac1{2\pi}\int_{\T_1}F_A^2-\Bigl(\frac1{2\pi}\int_{\T_1}F_A\Bigr)^2=\Var_{x\sim\Unif(\T_1)}\!\bigl(F_A(x)\bigr),
\]
since $\tfrac1{2\pi}\int F_A=q z$ by \eqref{eq:Dmass}.

To bound the variance, set $E_\delta(u)=1-D_\delta(u)$, so $0\le E_\delta\le1$ and, by
\eqref{eq:Dmass}, $\int_{\T_1}E_\delta=2\pi-2\pi q=2a_\delta$. With
$G_A(x)=\int_A E_\delta(\theta-x)\,d\theta$ we have $F_A(x)=z-G_A(x)$, hence
$\Var(F_A)=\Var(G_A)\le\EE[G_A^2]$. Now $0\le G_A(x)\le\int_{\T_1}E_\delta=2a_\delta$ pointwise, and
\[
  \EE[G_A]=\frac1{2\pi}\int_{\T_1}G_A(x)\,dx=\frac{z}{2\pi}\int_{\T_1}E_\delta=\frac{a_\delta}{\pi}z,
\]
so, using $G_A^2\le 2a_\delta\,G_A$,
\[
  \EE[G_A^2]\le 2a_\delta\,\EE[G_A]=2a_\delta\cdot\frac{a_\delta}{\pi}z=\frac{2a_\delta^2}{\pi}\abs{A},
\]
which is \eqref{eq:excess}.

Finally, for a short arc $A_L=[0,L]$, \eqref{eq:FA} gives
\[
  \frac{H(A_L)-q^2L^2}{L^2}\xrightarrow[L\downarrow0]{}\frac1{2\pi}\int_{\T_1}D_\delta(u)^2\,du-q^2>0,
\]
the limit being positive because $D_\delta$ is nonconstant (so $\tfrac1{2\pi}\int
D_\delta^2>(\tfrac1{2\pi}\int D_\delta)^2=q^2$ by Cauchy--Schwarz). Hence $H(A_L)>q^2L^2$ for all
small $L$, and the deterministic contraction fails.
\end{proof}

\subsection{The summed \texorpdfstring{$L^2$}{L2} estimate}

\begin{proposition}[Summed $L^2$ bound]\label{prop:L2}
With $m_N$ as in \eqref{eq:mN},
\begin{equation}\label{eq:L2}
  \sum_{k\ge m_N}\EE Z_k^2\le(1+o(1))\,4\pi^2\sum_{k\ge m_N}q^{2k}=(1+o(1))\sum_{k\ge m_N}(\EE Z_k)^2 .
\end{equation}
\end{proposition}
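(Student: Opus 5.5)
We iterate the one-step kernel bound to get a linear recursion for the second moments. Set $v_k=\EE Z_k^2$. Taking expectations in \eqref{eq:Hprob} and applying \Cref{lem:excess} with $A=\Mz_k(0)$ (a deterministic finite union of arcs once we condition on $\F_k$) gives
\[
  v_{k+1}=\EE\bigl[H(\Mz_k(0))\bigr]\le q^2v_k+\frac{2a_\delta^2}{\pi}\EE Z_k=q^2v_k+4a_\delta^2q^k ,
\]
where we used $\EE Z_k=2\pi q^k$ from \eqref{eq:recursion}. Since $v_0=4\pi^2$, unrolling this recursion yields
\[
  v_k\le 4\pi^2q^{2k}+4a_\delta^2\sum_{j=0}^{k-1}q^{2(k-1-j)}q^{j}
  =4\pi^2q^{2k}+4a_\delta^2q^{k-1}\sum_{i=0}^{k-1}q^{i}
  \le 4\pi^2q^{2k}+\frac{4a_\delta^2q^{k-1}}{1-q}.
\]
With $a_\delta=\pi(1-q)$ from \eqref{eq:1mq}, the error term equals $4\pi^2(1-q)q^{k-1}$. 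So the pointwise bound is
\[
  v_k\le 4\pi^2\bigl(q^{2k}+(1-q)q^{k-1}\bigr).
\]

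Next we sum over $k\ge m_N$ and compare the two pieces. The main term is
\[
  \sum_{k\ge m_N}q^{2k}=\frac{q^{2m_N}}{1-q^2}\asymp \frac{q^{2m_N}}{2(1-q)}.
\]
The error term is
\[
  (1-q)\sum_{k\ge m_N}q^{k-1}=q^{m_N-1}.
\]
Their ratio is therefore of order
\[
  \frac{q^{m_N-1}\cdot 2(1-q)}{q^{2m_N}}\asymp (1-q)\,q^{-m_N}.
\]
By the computation in \Cref{prop:first}, $q^{-m_N}=N^{1-\varepsilon}(1+o(1))$ and $1-q\sim\lambda/(\pi N)$, so the ratio is $O(N^{-\varepsilon})=o(1)$. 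This gives \eqref{eq:L2}. The last equality in \eqref{eq:L2} is just $(\EE Z_k)^2=4\pi^2q^{2k}$.

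The only delicate point is making sure the excess term is genuinely lower order. The excess is linear in $\abs{A}$, so it contributes $\asymp a_\delta^2\,\EE Z_k/(1-q)$. This is smaller than $(\EE Z_k)^2$ exactly when $\EE Z_k\gg 1-q\asymp 1/N$, which is the subcritical regime enforced by the $(1-\varepsilon)$ in $m_N$. The recursion is summed from $k=0$, but since each term is multiplied by a factor $q^{\cdot}\le1$, the unrolled bound above is already uniform in $k$; no separate treatment of the early steps is needed.
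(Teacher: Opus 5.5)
Your proposal is correct and follows the paper's proof essentially line by line. It derives the same recursion $v_{k+1}\le q^2v_k+4a_\delta^2q^k$ from \eqref{eq:Hprob} and \Cref{lem:excess}, unrolls it to $v_k\le 4\pi^2q^{2k}+4a_\delta^2q^{k-1}/(1-q)$, and after summing over $k\ge m_N$ obtains the same error-to-main ratio $(1-q^2)q^{-m_N-1}=O(N^{-\varepsilon})$.
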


\begin{proof}
Combining \eqref{eq:Hprob} with \Cref{lem:excess} applied to $A=\Mz_k(0)$ (so $\abs{A}=Z_k$),
\[
  \EE\bigl[Z_{k+1}^2\mid\F_k\bigr]=H(\Mz_k(0))\le q^2Z_k^2+\frac{2a_\delta^2}{\pi}Z_k .
\]
Taking expectations and using $\EE Z_k=2\pi q^{k}$, the sequence $s_k:=\EE Z_k^2$ obeys
$s_{k+1}\le q^2 s_k+4a_\delta^2 q^{k}$. Since $s_0=Z_0^2=4\pi^2$, iteration gives
\[
  s_k\le 4\pi^2 q^{2k}+4a_\delta^2\sum_{j=0}^{k-1}q^{\,2(k-1-j)}q^{\,j}
  \le 4\pi^2 q^{2k}+\frac{4a_\delta^2}{1-q}\,q^{\,k-1},
\]
the last step bounding $\sum_{j=0}^{k-1}q^{\,2(k-1-j)+j}=q^{\,k-1}\sum_{j=0}^{k-1}q^{\,k-1-j}
\le q^{\,k-1}/(1-q)$. Summing over $k\ge m_N$,
\[
  \sum_{k\ge m_N}s_k\le 4\pi^2\sum_{k\ge m_N}q^{2k}+\frac{4a_\delta^2}{1-q}\sum_{k\ge m_N}q^{\,k-1}
  =4\pi^2\,\frac{q^{2m_N}}{1-q^2}+\frac{4a_\delta^2\,q^{\,m_N-1}}{(1-q)^2}.
\]
Using $a_\delta=\pi(1-q)$ from \eqref{eq:1mq}, the ratio of the error term to the main term is
\[
  \frac{4a_\delta^2 q^{\,m_N-1}/(1-q)^2}{4\pi^2 q^{2m_N}/(1-q^2)}=(1-q^2)\,q^{-m_N-1}.
\]
By \eqref{eq:1mq}, $1-q^2=(1-q)(1+q)=O(N^{-1})$, while $q^{-m_N}=N^{1-\varepsilon}(1+o(1))$ from the
proof of \Cref{prop:first}. Hence the ratio is $O(N^{-1})\,O(N^{1-\varepsilon})=O(N^{-\varepsilon})=o(1)$,
which proves \eqref{eq:L2}; the final equality uses $(\EE Z_k)^2=4\pi^2 q^{2k}$.
\end{proof}

\section{Proof of the main theorem}
\label{sec:proof}

\begin{proposition}[The lower bound]\label{prop:lb}
For every fixed $\lambda>0$,
\[
  \liminf_{N\to\infty}\frac{\EE[\omega_{N,\lambda}]}{\log N}\ge\frac{1}{2\lambda}.
\]
\end{proposition}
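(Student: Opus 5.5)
The plan is to run the Paley--Zygmund scheme already set up in \Cref{sec:reduction,sec:kernel}. Fix $\varepsilon\in(0,1)$ and $m_N$ as in \eqref{eq:mN}. First I bound the second moment of $Y_N$. Start from the identity \eqref{eq:secondid} with $m=m_N$ and insert the correlation bound \eqref{eq:corrbound}:
\[
  \EE Y_N^2\le\EE Y_N+\frac{1}{2\pi^2(1-q)}\sum_{k\ge m_N}\EE Z_k^2 .
\]
Applying \Cref{prop:L2} and $\EE Z_k=2\pi q^k$, the sum is at most $(1+o(1))\,4\pi^2 q^{2m_N}/(1-q^2)$. The second term is therefore bounded by
\[
  (1+o(1))\frac{2q^{2m_N}}{(1-q)^2(1+q)}=(1+o(1))\Bigl(\frac{q^{m_N}}{1-q}\Bigr)^2=(1+o(1))(\EE Y_N)^2,
\]
using $1+q\to2$ and \Cref{prop:first}. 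Since $\EE Y_N\to\infty$, the term $\EE Y_N$ is $o((\EE Y_N)^2)$. Hence $\EE Y_N^2\le(1+o(1))(\EE Y_N)^2$, and Paley--Zygmund, in the form $\PP(Y_N>0)\ge(\EE Y_N)^2/\EE Y_N^2$, gives $\PP(Y_N>0)\to1$.

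Second, I convert this into a statement about $T_{\mathrm{tree}}$. On the event $\{Y_N>0\}$ some $B_k=1$ with $k\ge m_N$, so particle $k+1$ creates a new tree and $T_{\mathrm{tree}}\ge m_N+1$. Here one should check that $T_{\mathrm{tree}}$ is a.s.\ finite; that is harmless for a lower bound. Because $T_{\mathrm{tree}}\ge0$,
\[
  \EE T_{\mathrm{tree}}\ge m_N\,\PP(Y_N>0)=(1-o(1))(1-\varepsilon)\frac{\pi}{\lambda}N\log N .
\]
By \eqref{eq:count}, $\EE[\omega_{N,\lambda}]\ge(1-o(1))(1-\varepsilon)\log N/(2\lambda)$. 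Taking $\liminf$ and then letting $\varepsilon\downarrow0$ gives the claim.

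The main obstacle is the constant bookkeeping in the first step. The factor $1/(\pi(1-q))$ from \eqref{eq:secondid}, the $1/(2\pi)$ from \eqref{eq:corrbound}, and $1/(1-q^2)$ must combine to exactly $(\EE Y_N)^2$ to leading order, and any slack there would destroy the $1+o(1)$ ratio. I would verify this carefully using $1-q\sim\lambda/(\pi N)$ from \eqref{eq:1mq}. The identity \eqref{eq:count} also needs care: $T_{\mathrm{tree}}$ is independent of the arrival times, so $\EE t_{T}=\EE T/(2\pi N)$ by Wald/independence. I take this as given from \cite{CPZ}.
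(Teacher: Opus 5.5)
Your proposal is correct and follows the paper's own proof essentially step for step. You use the same bound $\EE Y_N^2\le\EE Y_N+\frac{1}{2\pi^2(1-q)}\sum_{k\ge m_N}\EE Z_k^2$ from \eqref{eq:secondid} and \eqref{eq:corrbound}, the same reduction of the second term to $(1+o(1))(\EE Y_N)^2$ via \Cref{prop:L2} (your $\frac{2q^{2m_N}}{(1-q)^2(1+q)}$ is the paper's $\frac{2q^{2m_N}}{(1-q)(1-q^2)}$), then Paley--Zygmund, and finally the conversion $T_{\mathrm{tree}}\ge m_N+1$ on $\{Y_N>0\}$ through \eqref{eq:count}.
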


\begin{proof}
Fix $\varepsilon\in(0,1)$ and $m_N$ as in \eqref{eq:mN}. Starting from the second-moment identity
\eqref{eq:secondid} and the correlation bound \eqref{eq:corrbound},
\[
  \EE Y_N^2\le\EE Y_N+\frac{1}{2\pi^2(1-q)}\sum_{k\ge m_N}\EE Z_k^2 .
\]
By the summed $L^2$ estimate \eqref{eq:L2} and $\sum_{k\ge m_N}(\EE Z_k)^2=4\pi^2q^{2m_N}/(1-q^2)$,
\[
  \EE Y_N^2\le\EE Y_N+(1+o(1))\,\frac{2\,q^{2m_N}}{(1-q)(1-q^2)} .
\]
Since $1-q^2=(1-q)(1+q)\sim2(1-q)$, the last term is
$(1+o(1))\,q^{2m_N}/(1-q)^2=(1+o(1))(\EE Y_N)^2$ by \Cref{prop:first}. As $\EE Y_N\to\infty$, the
linear term $\EE Y_N$ is negligible against $(\EE Y_N)^2$, so
\[
  \EE Y_N^2\le(1+o(1))(\EE Y_N)^2 .
\]
The Paley--Zygmund inequality \cite{Durrett} (equivalently, Cauchy--Schwarz applied to
$Y_N=Y_N\one_{\{Y_N>0\}}$) then gives
\[
  \PP(Y_N>0)\ge\frac{(\EE Y_N)^2}{\EE Y_N^2}\xrightarrow[N\to\infty]{}1 .
\]
If $Y_N>0$, some $B_k=1$ with $k\ge m_N$, i.e.\ a new tree is born at particle index at least
$m_N+1$, so $T_{\mathrm{tree}}\ge m_N+1$. Hence, by \eqref{eq:count},
\[
  \EE[\omega_{N,\lambda}]=\frac{\EE[T_{\mathrm{tree}}]}{2\pi N}
  \ge\frac{m_N\,\PP(Y_N>0)}{2\pi N}=(1-o(1))(1-\varepsilon)\frac{\log N}{2\lambda},
\]
where we used $m_N=(1-\varepsilon)(\pi/\lambda)N\log N\,(1+o(1))$. Letting $\varepsilon\downarrow0$
gives the claim.
\end{proof}

\begin{proof}[Proof of \Cref{thm:main}]
The upper bound \eqref{eq:ub} is \cite[Thm.~1.5]{CPZ}, so
$\limsup_{N}\EE[\omega_{N,\lambda}]/\log N\le1/(2\lambda)$. \Cref{prop:lb} gives the matching
$\liminf\ge1/(2\lambda)$. Together they yield \eqref{eq:limit}.
\end{proof}

\begin{remark}[Martingale reformulation]\label{rem:mart}
The mechanism behind the lower bound admits a martingale reading. By \eqref{eq:recursion} the
process $M_k=Z_k/q^{k}$ is a nonnegative $\F_k$-martingale, with increments
$M_{k+1}-M_k=(Z_{k+1}-qZ_k)/q^{k+1}$; by \eqref{eq:Hprob} and \Cref{lem:excess},
\[
  \EE\bigl[(M_{k+1}-M_k)^2\mid\F_k\bigr]=q^{-2(k+1)}\bigl(H(\Mz_k(0))-q^2Z_k^2\bigr)
  \le q^{-2(k+1)}\frac{2a_\delta^2}{\pi}Z_k .
\]
Hence the predictable quadratic variation of $M$, summed up to the critical time, is $o(1)$ in
expectation:
\[
  \EE\sum_{k<m_N}\EE\bigl[(M_{k+1}-M_k)^2\mid\F_k\bigr]
  \le\frac{2a_\delta^2}{\pi}\sum_{k<m_N}q^{-2(k+1)}\,\EE Z_k
  =4\pi a_\delta\,q^{-m_N}(1+o(1))=O(N^{-\varepsilon}),
\]
using $\EE Z_k=2\pi q^{k}$, $a_\delta=\pi(1-q)$ and $q^{-m_N}=N^{1-\varepsilon}(1+o(1))$. With Doob's inequality,
$M_k$ is essentially frozen at its mean below the critical scale, which is the same
non-fragmentation statement that drives \Cref{prop:lb}.
\end{remark}

\section*{Acknowledgments}

\noindent The main results of this paper were obtained by Eureka and subsequently
verified by the authors. Eureka is a multi-agent system developed by JIUCHONG at the University of
Science and Technology of China for mathematical research through human--AI interaction. The
authors assume full responsibility for the content of this paper.

\end{document}